\documentclass[11pt,reqno]{amsart}

\usepackage[utf8]{inputenc}
\usepackage[T1]{fontenc}
\usepackage{lmodern}
\usepackage{microtype}
\usepackage{amsmath,amssymb,amsthm,mathtools}
\usepackage{aliascnt}
\usepackage[colorlinks=true,linkcolor=blue,citecolor=blue,urlcolor=blue]{hyperref}
\usepackage[capitalize,nameinlink]{cleveref}
\theoremstyle{plain}
\newtheorem{theorem}{Theorem}[section]
\newaliascnt{proposition}{theorem}
\newtheorem{proposition}[proposition]{Proposition}
\aliascntresetthe{proposition}
\newaliascnt{lemma}{theorem}
\newtheorem{lemma}[lemma]{Lemma}
\aliascntresetthe{lemma}
\newaliascnt{corollary}{theorem}
\newtheorem{corollary}[corollary]{Corollary}
\aliascntresetthe{corollary}
\theoremstyle{definition}
\newaliascnt{definition}{theorem}
\newtheorem{definition}[definition]{Definition}
\aliascntresetthe{definition}
\theoremstyle{remark}
\newaliascnt{remark}{theorem}
\newtheorem{remark}[remark]{Remark}
\aliascntresetthe{remark}
\numberwithin{equation}{section}

\crefname{proposition}{proposition}{propositions}
\Crefname{proposition}{Proposition}{Propositions}
\crefname{lemma}{lemma}{lemmas}
\Crefname{lemma}{Lemma}{Lemmas}
\crefname{corollary}{corollary}{corollaries}
\Crefname{corollary}{Corollary}{Corollaries}
\crefname{definition}{definition}{definitions}
\Crefname{definition}{Definition}{Definitions}
\crefname{remark}{remark}{remarks}
\Crefname{remark}{Remark}{Remarks}

\DeclareMathOperator{\Spa}{Spa}
\newcommand{\cO}{\mathcal O}
\newcommand{\N}{\mathbf N}
\newcommand{\Cp}{C_p}
\newcommand{\whotimes}{\mathbin{\widehat\otimes}}

\title[Finite quotients of sousperfectoid spaces]{Finite quotients of sousperfectoid adic spaces need not be adic}
\author[Xiao-Ming Fu]{Xiao-Ming Fu}
\address{School of Mathematical Sciences, University of Science and Technology of China, Hefei, Anhui 230026, P.R.\ China}
\email{fuxm@ustc.edu.cn}
\author[Tianyang Sun]{Tianyang Sun}
\address{School of Mathematical Sciences, University of Science and Technology of China, Hefei, Anhui 230026, P.R.\ China}
\email{tysun@mail.ustc.edu.cn}
\date{August 2026}
\subjclass[2020]{Primary 14G22; Secondary 13A50, 14G45, 14L30}
\keywords{adic space, finite group quotient, perfectoid space, sousperfectoid ring, stably uniform Huber ring, rational localization}

\begin{document}

\begin{abstract}
For every prime $p$ and every perfectoid field $K$ of characteristic $p$, we
construct a geometrically normal sousperfectoid affinoid adic space with a
$C_p$-action whose quotient in the category of $v$-ringed spaces is not adic.
The source is stably uniform and sheafy, while a cofinal sequence of rational
neighborhoods at a fixed point acquires new degree-one invariant sections that
do not descend by completed rational localization.  Consequently, finite
quotient stability for affinoid perfectoid spaces does not extend to
sousperfectoid spaces.
\end{abstract}

\maketitle

\section{Introduction}\label{sec:introduction}

Hansen asked when the quotient of an adic space by a finite group is again
adic~\cite[Introduction]{HansenQuotients}.  He proved that the quotient is adic
when the space admits a covering by open affinoids stable under the group
action and the group order is invertible
~\cite[Theorem~1.1]{HansenQuotients}.  For an affinoid source, Hansen further
showed that, under the same invertibility hypothesis, the invariant Huber pair
is sheafy and represents the quotient, while suggesting that invertibility
might be unnecessary
~\cite[Theorem~1.2 and the paragraph following it]{HansenQuotients}.  For a
perfectoid space admitting such a covering by affinoid perfectoid subspaces,
he proved that the quotient is perfectoid if either the group order is
invertible or the action is linear over a perfectoid base field
~\cite[Theorem~1.4]{HansenQuotients}.  Hansen and Johansson established a
global perfectoid result under analytic separation and an affinoid-neighborhood
condition on the closures of rank-one points
~\cite[Theorem~5.8]{HansenJohansson}.  Zavyalov treated spaces locally
topologically of finite type over a locally strongly noetherian analytic base,
subject to an affinoid-neighborhood condition on finite orbits
~\cite[Theorem~1.3.3]{ZavyalovQuotients}.  Outside these regimes the coarse
quotient need not be adic~\cite[Section~5.1]{HansenJohansson}.  We give an
explicit counterexample with a geometrically normal, stably uniform, and
sheafy source.  In particular, our example shows that the invertibility
hypothesis in Hansen's affinoid quotient theorem cannot be removed for
arbitrary sheafy Tate--Huber pairs, even when the source is geometrically
normal, stably uniform, and sousperfectoid.

We use Hansen's quotient in the category of $v$-ringed spaces, whose underlying
space is the finite topological quotient and whose structure sheaf is formed
from invariant sections~\cite[Definition~2.2 and Proposition~2.3]
{HansenQuotients}.

\begin{theorem}\label{thm:uniform-main}
Let $p$ be a prime number and let $K$ be a perfectoid field of characteristic
$p$.  There exist a complete sousperfectoid Tate--Huber pair $(R,R^\circ)$
over $K$ and a continuous action of $\Cp$ on $X=\Spa(R,R^\circ)$ such that
$(R,R^\circ)$ is stably uniform and sheafy, but the quotient $X/\Cp$ in the
category of $v$-ringed spaces is not an adic space.  More precisely, the image
of a $\Cp$-fixed $K$-point has no affinoid neighborhood.  Moreover,
$R\whotimes_KL$ is a normal integral domain for every complete extension
$L/K$.
\end{theorem}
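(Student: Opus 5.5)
We will build the example in characteristic $p$, using Artin--Schreier theory, where the group order $p$ is not invertible. Let $\sigma$ be a generator of $\Cp$. Start from a perfectoid Tate algebra $P=K\langle T^{1/p^\infty}\rangle$ or a close variant. The source $R$ will be a closed, non-perfectoid subring of $P$ that is stable under a $\Cp$-action. Concretely, consider the action $\sigma(T)=T/(1+\varpi T)$, equivalently $\sigma(1/T)=1/T+\varpi$. This is an Artin--Schreier-type translation with a unique fixed point at $T=0$, the fixed $K$-point of the statement. Take $R$ to be the completion of a subalgebra generated by suitable elements such as $T$ and chosen $p$-power roots. This makes $R$ a topological direct summand of a perfectoid ring, hence sousperfectoid. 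By the standard results of Hansen--Kedlaya on sousperfectoid rings, $R$ is then stably uniform and sheafy. We choose the generators so that $R$ is $\sigma$-stable but its invariant ring is not "generated at the fixed point" in a way compatible with localization.

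The core of the argument is local at $x=0$. Consider the cofinal rational neighborhoods $U_n=\{|T|\le|\varpi|^n\}$, which are $\Cp$-stable. We then do two things.

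First, we compute the invariants $\cO(U_n)^{\Cp}$. On $U_n$ the element $f_n=T^{-1}\cdot(\text{correction})$, or more naturally an Artin--Schreier element $y$ with $y^p-y$ expressible near $0$ after rescaling by $\varpi^{n}$, produces a new $\Cp$-invariant section that has degree one in a fixed coordinate. This section is not in the closure of $\cO(U_{n-1})^{\Cp}[\,\cdot\,]$.

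Second, we argue by contradiction. Suppose $x$ had an affinoid neighborhood $\Spa(A,A^+)$ in $X/\Cp$. Any such neighborhood contains the image of some $U_n$. Hansen's description of the quotient \cite[Proposition~2.3]{HansenQuotients} gives $\cO_{X/\Cp}(V)=\cO_X(\pi^{-1}V)^{\Cp}$. So the rings $\cO(U_m)^{\Cp}$ for $m\ge n$ would all be completed rational localizations of $A$. In particular, each would be topologically generated over the image of $A$ by inverses of finitely many elements. We then use a norm/degree argument to show the new degree-one invariants on $U_{m+1}$ cannot arise from $\cO(U_m)^{\Cp}$ by completed rational localization. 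Here "degree" means a filtration by the $\sigma$-eigen/Artin--Schreier degree, or a valuation on a spectral-norm Laurent expansion. The point is that rational localization only adjoins elements of degree zero in this filtration. This contradiction shows $X/\Cp$ is not adic.

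Finally, for geometric normality we identify $R\whotimes_K L$ as a ring of the same explicit form over $L$. We show it is an integral domain by embedding it into $L\langle T^{1/p^\infty}\rangle$, which is a domain. For normality we use a union-of-normal-rings argument: write it as the completion of a filtered union of Tate algebras, which are normal, and check that integral closure is preserved under the uniform completion.

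The main obstacle is the nondescent step. We need a precise invariant, such as a valuation or filtration on $\cO(U_m)$, under which rational localization is "degree-preserving" while the invariant sections on $U_{m+1}$ have strictly new degree. Our first attempt is an explicit computation of $\cO(U_m)^{\Cp}$ via the trace map. The trace is surjective onto a controlled ideal because $\sigma-1$ acts nilpotently modulo $\varpi$.
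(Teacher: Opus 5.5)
There is a genuine gap: the proposal never produces the example. Your reduction step is sound and matches the paper's persistent-obstruction principle: take cofinal stable rational neighborhoods, identify quotient sections with invariants, and reach a contradiction with rational localization on a putative affinoid chart. The gap is in everything that has to feed into it.
\begin{itemize}
\item You do not specify the subring $R\subset K\langle T^{1/p^\infty}\rangle$.
\item You do not check that $R$ is $\sigma$-stable. For $a=m/p^k$ with $p\nmid m$, $\sigma(T^a)=T^a(1+\varpi^{1/p^k}T^{1/p^k})^{-m}$ contains $T^{a+1/p^k}$, so monomial-type subrings are not automatically stable.
\item You do not produce the continuous $R$-linear retraction that sousperfectoidness requires. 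Being a closed subring of a perfectoid ring is not enough.
\item The decisive non-descent step is left open as ``the main obstacle.''
\item The normality step is not an argument: completing a filtered union of normal rings does not preserve normality in general.
\end{itemize}

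The one-variable setup also works against the mechanism you want. Both natural rings in it have adic quotients by the theorems of Zavyalov and Hansen cited in the introduction. These are $K\langle T\rangle$, which is topologically of finite type, and $K\langle T^{1/p^\infty}\rangle$, which is perfectoid with a $K$-linear action. So an intermediate $R$ would have to break adicity by some mechanism you have not identified. More concretely, $\sigma$ moves the very coordinate you localize along, so $T$ is not an invariant section. You would need the norm $\prod_a\sigma^a(T)$ even to present $q(U_n)$ as a rational locus of the quotient. There is also no evident grading in which the localizing function has degree zero while the new invariants on $U_{n+1}$ carry a degree unreachable from $\cO(U_n)^{\Cp}$. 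Nor is it clear where one genuinely new invariant for every $n$ would come from.

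The paper resolves exactly this by separating the two roles.
\begin{itemize}
\item In $R=K\langle T,s\rangle+sJ$, the localizing functions $T,s$ are invariant and of $\mathbf z$-degree zero.
\item $\Cp$ acts by a degree-preserving triangular substitution on infinitely many blocks: $x_{j,\ast}\mapsto x_{j,\ast}-\pi^jy_{j,0}$ and $x_{j,n}\mapsto x_{j,n}+\pi^jy_{j,n}-Ty_{j,n+1}$.
\item The block-$j$ kernel of $\sigma-1$ is spanned by $u_{j,\ell}=x_{j,\ast}+\sum_n(T/\pi^j)^nx_{j,n}$. This converges on $V_\ell=\{|T|\le|\pi|^\ell,\ |s|\le|\pi|^{2\ell}\}$ exactly when $j<\ell$, so each step creates the new invariant $su_{\ell,\ell+1}$.
\item The coefficient of $sx_{\ell,\ast}$ is a continuous functional, linear over the degree-zero ring $D_{\ell+1}$. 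It vanishes on the image of $B_\ell[T/\pi^{\ell+1},s/\pi^{2\ell+2}]$ and hence on its completion, but equals $1$ on $su_{\ell,\ell+1}$. This is the precise form of your ``rational localization only adjoins degree-zero elements.''
\item The pinching by $s$ makes $R$ a monoid algebra $K\langle M\rangle$ with an $R$-linear monomial projection from the perfectoid $K\langle M[1/p]\rangle$. It also makes the bidiscs cofinal at $x_0$, via the bound $|\pi^Na(x)|^2\le|s(x)|$.
\item Normality comes from complete integral closedness of the infinite Tate algebra together with the pullback description $R=\{f\in K\langle T,s,\mathbf z\rangle: f\bmod s\in K\langle T\rangle\}$.
\end{itemize}
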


The last assertion is geometric normality.  It removes the nilpotent and
nonuniform features of the elementary square-zero model that first suggests
the localization obstruction.

\begin{corollary}[The perfectoid--sousperfectoid boundary]
\label{cor:no-unrestricted-quotient}
Finite-quotient stability for affinoid perfectoid spaces does not extend to
affinoid sousperfectoid spaces, even for a cyclic group of prime order.
Stable uniformity and sheafiness of the source do not suffice to make the
quotient adic.  In particular, the ring $R$ in \Cref{thm:uniform-main} is
sousperfectoid but not perfectoid.
\end{corollary}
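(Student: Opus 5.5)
The corollary follows from \Cref{thm:uniform-main} combined with Hansen's perfectoid quotient theorem, so the plan is mostly bookkeeping. Take $K$, $(R,R^\circ)$ and the $\Cp$-action on $X=\Spa(R,R^\circ)$ as in \Cref{thm:uniform-main}. The space $X$ is affinoid and sousperfectoid, and it is stable under the action. The theorem says $X/\Cp$ is not an adic space. This already gives the second sentence of the corollary, because \Cref{thm:uniform-main} also asserts that $(R,R^\circ)$ is stably uniform and sheafy. The group is cyclic of prime order $p$, which covers the phrase ``even for a cyclic group of prime order.''

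It remains to show that $R$ is not perfectoid, and I will argue by contradiction. Suppose $R$ were perfectoid. Then $X$ is an affinoid perfectoid space and is itself a $\Cp$-stable affinoid perfectoid cover of itself. The action is $K$-linear over the perfectoid field $K$, since the action in \Cref{thm:uniform-main} is by automorphisms of the Huber pair over $K$; I will check this explicitly in the construction. So \cite[Theorem~1.4]{HansenQuotients} applies, using its linear-action alternative, because the group order $p$ is not invertible in characteristic $p$. That theorem says $X/\Cp$ is perfectoid, and in particular adic, which contradicts \Cref{thm:uniform-main}.

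One could instead prove non-perfectoidness directly, for example by showing that Frobenius is not surjective on $R^\circ/\varpi$. In characteristic $p$ this amounts to showing that $R$ is not perfect. This route depends on the explicit construction, so the contradiction argument is cleaner.

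The main point to verify is that Hansen's hypotheses match the present setting exactly. Two things need checking:
\begin{itemize}
\item Hansen's quotient in $v$-ringed spaces is the same object as in \Cref{thm:uniform-main}. This holds by the convention fixed in the introduction.
\item The action is continuous and linear over $K$. This should be immediate from the construction, where $\Cp$ acts through $K$-algebra automorphisms.
\end{itemize}
Once these are confirmed, the statement that finite-quotient stability for affinoid perfectoid spaces does not extend to affinoid sousperfectoid spaces is exactly the existence of $X$.
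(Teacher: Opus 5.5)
Your proposal is correct and follows the paper's own argument. The first two sentences of the corollary are read off from \Cref{thm:uniform-main}, and non-perfectoidness follows by contradiction because Hansen's Theorem~1.4 would otherwise make $X/\Cp$ perfectoid, hence adic. You additionally check that the $K$-linear alternative of that theorem is the relevant one (the action fixes $K$, $T$, $s$, and $p$ is not invertible), which the paper leaves implicit. Your direct alternative also works: $T$ has no $p$th root in $R=K\langle M\rangle$, since $p$th powers only have exponents in $pM$.
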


Here is the construction and the obstruction.  Choose
$\pi\in K$ with $0<|\pi|<1$, let $J$ be the positive-degree ideal in a
completed Tate algebra on countably many variables, and form the pinched ring
\[
 R=K\langle T,s\rangle+sJ.
\]
The pinching has two essential effects.  It realizes $R$ as a module retract
of a perfectoid completed monoid algebra, and it makes the bidiscs
\[
 |T|\leq|\pi|^\ell,\qquad |s|\leq|\pi|^{2\ell}
\]
cofinal at a distinguished point.  A triangular $\Cp$-action preserves the
pinching.  On the $\ell$th bidisc, its degree-one fixed vectors contain
precisely the recurrence blocks indexed by $j<\ell$.  Passing to the next
bidisc creates the new invariant $s u_{\ell,\ell+1}$.  A continuous
monomial-coordinate functional vanishes on the preceding completed
localization image and takes value one on this new section.  The persistent
failure along a cofinal neighborhood system rules out every affinoid chart at
the image point.

The source still has countably many variables and is not topologically of
finite type.  Thus the example does not conflict with the finite-type
quotient theorem.  We make no priority claim beyond giving an explicit
construction.

In \cref{sec:persistent-obstruction} we isolate the local principle that turns
the comparison failure into non-adicity.  The source and its cyclic action
are constructed in \cref{sec:uniform-construction}.  The cofinal bidiscs,
local invariant calculation, and proof of \Cref{thm:uniform-main} appear in
\cref{sec:uniform-quotient}.  A final proposition records a purely
inseparable feature of the particular missing sections without asserting
that perfection makes the full comparison map surjective.

\section{Persistent rational-localization obstructions}
\label{sec:persistent-obstruction}

Let $(A,A^+)$ be a complete sheafy Tate--Huber pair, let a finite group $G$
act continuously, and let
\[
 q\colon Z=\Spa(A,A^+)\longrightarrow Y=Z/G
\]
be Hansen's quotient in the category of $v$-ringed spaces.  Its sheaf satisfies
\[
 \cO_Y(U)=\cO_Z(q^{-1}(U))^G,
\]
and restriction of valuations identifies the underlying topological quotient
with the adic spectrum of the global invariant pair
~\cite[Theorem~1.2]{HansenQuotients}.  The topological identification alone
does not imply that $Y$ is adic: invariant sections must also satisfy the
rational-localization rule.

Call an open subset of a $v$-ringed space a \emph{globally defined rational
locus} if it is cut out by finitely many global sections through rational
inequalities whose restrictions define rational subsets in every affinoid
open under consideration.

\begin{proposition}[Persistent obstruction]\label{prop:persistent-obstruction}
Let $Y$ be a $v$-ringed space and let $y\in Y$.  Suppose that
\[
 V_0\supset V_1\supset V_2\supset\cdots
\]
is a cofinal sequence of open neighborhoods of $y$, each $V_n$ is a globally
defined rational locus, and $V_{n+1}$ is intrinsically a rational locus in
$V_n$ defined by finitely many inequalities.  If the canonical completed
rational-localization map
\[
 \cO_Y(V_n)\left\langle
   \frac{f_{n,1}}{s_{n,1}},\ldots,\frac{f_{n,r_n}}{s_{n,r_n}}
 \right\rangle
 \longrightarrow \cO_Y(V_{n+1})
\]
fails to be an isomorphism for every $n$, then $y$ has no affinoid
neighborhood in $Y$.
\end{proposition}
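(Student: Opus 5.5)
We argue by contradiction.  Suppose $y$ has an affinoid neighborhood $W$, so that $W\cong\Spa(B,B^+)$ for a complete sheafy Huber pair $(B,B^+)$ and $\cO_Y|_W$ is the structure sheaf of this adic spectrum.  Because the $V_n$ are cofinal, some $V_N$ lies in $W$, and then every $V_n$ with $n\ge N$ lies in $W$.  The plan is to show that, for $n\ge N$, both $V_n$ and $V_{n+1}$ are rational subsets of $\Spa(B,B^+)$, and that the displayed map is then forced to be an isomorphism.  This contradicts the hypothesis at $n=N$.

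First, $V_n$ is a rational subset of $W$.  Since $V_n$ is a globally defined rational locus, it is cut out by finitely many global sections $g_i,h$ of $\cO_Y$ through rational inequalities.  Restricting these sections to $W$ gives elements of $B=\cO_Y(W)$.  The defining inequalities are evaluated at points through the valuations carried by the $v$-ringed space, so they compute the same locus inside $\Spa(B,B^+)$.  By the defining clause of a globally defined rational locus, this locus is a rational subset of the affinoid open $W$.  This is exactly where the clause ``rational in every affinoid open under consideration'' is used.

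Second, we identify $\cO_Y(V_n)$.  Since $V_n=W(g/h)$ is rational in $W$, the definition of the structure sheaf on the adic space $W$ gives a canonical isomorphism $\cO_Y(V_n)\cong B\langle g/h\rangle$, and the restriction of $(\cO_Y,\cO_Y^+)$ to $V_n$ is the adic spectrum of this pair.  The hypothesis says $V_{n+1}$ is cut out inside $V_n$ by finitely many inequalities $|f_{n,j}|\le|s_{n,j}|\neq0$ with $f_{n,j},s_{n,j}\in\cO_Y(V_n)$.  A rational subset of a rational subset is rational, so $V_{n+1}$ is the rational subset of $\Spa(\cO_Y(V_n),\cO_Y^+(V_n))$ defined by this tuple.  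The structure sheaf of an adic space then gives
\[
\cO_Y(V_{n+1})\cong \cO_Y(V_n)\Big\langle\frac{f_{n,1}}{s_{n,1}},\ldots,\frac{f_{n,r_n}}{s_{n,r_n}}\Big\rangle .
\]
One must also check that this isomorphism is the canonical completed rational-localization map of the statement, namely the map induced by restriction $\cO_Y(V_n)\to\cO_Y(V_{n+1})$ through the universal property.  It is, because the universal property of rational localization determines the map uniquely from the restriction.  Hence the map in the statement is an isomorphism for $n=N$, a contradiction.

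The main obstacle is the compatibility in the first step.  We need that the topology and valuations on $Y$ coming from its $v$-ringed structure agree with those of $\Spa(B,B^+)$ on $W$.  We also need that the inequalities cutting out $V_n$ and $V_{n+1}$ (the standard nonvanishing and open-ideal conditions on the denominators) are honest rational data for $B$ and for $\cO_Y(V_n)$.  The first point holds because an isomorphism of $v$-ringed spaces respects the valuations at points.  The second is exactly what the wording ``globally defined rational locus'' and ``intrinsically a rational locus'' is meant to supply, so we invoke those hypotheses directly rather than reprove them.  Only one index $n\ge N$ is actually needed.  The word ``persistent'' matters only because $N$ is not known in advance, so we need failure for all $n$.
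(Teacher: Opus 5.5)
Your proof is correct and follows essentially the same route as the paper. You choose an affinoid neighborhood and use cofinality to place some $V_N$ inside it; the global and intrinsic rational descriptions then make $V_N$ and $V_{N+1}$ rational subdomains, so Huber's rational-localization property forces the comparison map to be an isomorphism, a contradiction. You also spell out that the comparison map is the canonical one (via the universal property) and that valuations are compatible, which the paper leaves implicit.
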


\begin{proof}
If $U$ were an affinoid neighborhood of $y$, cofinality would give an $n$
with $V_n\subset U$.  The global rational description would make $V_n$ an
affinoid rational subdomain of $U$, and the intrinsic description would make
$V_{n+1}$ an affinoid rational subdomain of $V_n$.  The rational-localization
property of affinoid adic spaces~\cite[Proposition~1.3]{HuberAdic} would make
the displayed map an isomorphism, a contradiction.
\end{proof}

\section{The pinched sousperfectoid source}
\label{sec:uniform-construction}

We now give the construction used in \Cref{thm:uniform-main}.

\begin{lemma}[Normality of the infinite polydisc]
\label{lem:infinite-tate-normal}
Let $L$ be a complete nonarchimedean field and let $\mathcal U$ be a
countable set of variables.  Then the completed infinite-variable Tate
algebra $L\langle\mathcal U\rangle$ is a completely integrally closed
domain.  In particular, it is normal.
\end{lemma}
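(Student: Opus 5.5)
The plan is to reduce to finitely many variables and use the Gauss norm. Write $A=L\langle\mathcal U\rangle$. Its elements are convergent power series $f=\sum_\alpha a_\alpha \mathcal U^\alpha$ over finitely supported multi-indices, with $a_\alpha\to0$, equipped with the Gauss norm $|f|=\max_\alpha|a_\alpha|$. I will use three standard facts.
\begin{enumerate}
\item The Gauss norm is multiplicative. For finitely many variables this is Gauss's lemma. For countably many variables it follows because every element is a limit of polynomials in finitely many variables, and the norm is continuous.
\item Consequently $A$ is a domain, and $A^\circ=\{f:|f|\le1\}=\cO_L\langle\mathcal U\rangle$.
\item $A$ is uniform and has power-bounded elements exactly those of norm $\le1$. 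Since the norm is multiplicative, $|f^n|=|f|^n$.
\end{enumerate}

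For complete integral closedness, let $x=g/h$ be an element of the fraction field. Suppose there is a nonzero $c\in A$ with $c\,x^n\in A$ for all $n$. I need to show $x\in A$. The idea is to extend the Gauss norm to the fraction field by $|g/h|=|g|/|h|$, which is a multiplicative absolute value. From $|c|\,|x|^n=|cx^n|<\infty$ for all $n$ we only get boundedness of the norms, and that alone does not force $x\in A$. So I need a finer argument than norms: I have to control the actual coefficients.

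To do this I enumerate $\mathcal U=\{U_1,U_2,\dots\}$ and consider the finite-variable subalgebras $A_m=L\langle U_1,\dots,U_m\rangle$. There is a continuous retraction $\rho_m\colon A\to A_m$ that sets $U_k=0$ for $k>m$. This is a ring homomorphism. However, applying it to $x=g/h$ requires $\rho_m(h)\neq0$, and I can arrange this for $m$ large by approximating $h$.

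The cleaner route is through the partial-degree structure. Fix a monomial order and use leading terms with respect to the Gauss norm. Consider the reduction map $A^\circ\to \widetilde A=k_L[\mathcal U]$, where $k_L$ is the residue field, which is a polynomial ring in infinitely many variables. This ring is a UFD, hence normal, and it is in fact completely integrally closed since it is a Krull domain.

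Now scale $g$ and $h$ to have norm $1$; this is possible only when the value group allows it, and otherwise I pass to a finite extension or argue with the graded reduction instead. The condition $cx^n\in A$ gives the relation $\tilde c\,\tilde g^{\,n}\in \tilde h^{\,n}\widetilde A$ in the reduction. Complete integral closedness of $\widetilde A$ then gives $\tilde h\mid\tilde g$. Subtracting the resulting term, I run a successive-approximation argument that decreases the norm of the remainder at each step. This constructs $q\in A$ with $g=qh$, and the process converges because $A$ is complete.

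The main obstacle is this approximation step. At each stage one must check that the remainder $x-q_k$ still satisfies the hypothesis $c(x-q_k)^n\in A$. That holds by the binomial expansion once $q_k\in A$, since then $x-q_k$ is again almost integral. One must also check that the norms decrease to $0$ rather than stalling. The decrease comes from the division in $\widetilde A$ killing the leading reduction.

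If the value group is not discrete enough for the scaling, I would instead use the graded reduction ring $\bigoplus_{r}A_{\le r}/A_{<r}$. This ring is a polynomial ring over the graded residue ring of $L$ and is still a Krull domain, so the same argument goes through.

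Normality then follows immediately, because a completely integrally closed domain is integrally closed.
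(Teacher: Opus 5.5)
\textbf{There is a genuine gap: the successive approximation need not converge when the value group of $L$ is dense, and the paper applies this lemma exactly in that case.}

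Your reduction step is correct. Scale $g$, $h$, $c$ to Gauss norm $1$. This is always possible, because a nonzero Gauss norm is attained by some coefficient and so lies in $|L^\times|$; the scaling worry and the graded fallback address a non-issue. Multiplicativity then gives $\tilde c\,\tilde g^{\,n}\in\tilde h^{\,n}k_L[\mathcal U]$. Complete integral closedness of $k_L[\mathcal U]$ gives $\tilde h\mid\tilde g$, and the remainder is again almost integral with the same $c$.

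The problem is convergence of the iteration, which you flag but do not resolve. Killing the leading reduction only shows $r_0>r_1>r_2>\cdots$ for the remainder norms, with $r_k\in|L^\times|$.
\begin{itemize}
\item If $L$ is discretely valued, this forces $r_k\to0$, and your proof is complete.
\item If the value group is dense, the $r_k$ may decrease to a positive limit. This is precisely the case the paper needs, since the lemma is applied over perfectoid fields. The $k$th correction has norm exactly $r_k$, so the partial sums are then not Cauchy, and completeness of $A$ gives nothing.
\end{itemize}

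This happens already for $x=1$ with careless lifts. Lift $\tilde q_0=1$ to $1+\lambda_1$ with $|\lambda_1|=r_1<1$. Then lift the next reduction $-1$ to $-1-\mu_2$ with $|\mu_2|=r_2/r_1$, and so on. The remainder norms can follow any prescribed sequence decreasing to a positive limit.

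More fundamentally, the property you establish, that every nonzero remainder can be strictly shrunk, does not imply membership on its own. Suppose $L$ is not spherically complete. Take a nested sequence of closed balls $B(a_k,\rho_k)$ in $L$ with empty intersection in $L$, and a point $\xi$ of a spherically complete extension lying in all of them. For every $q\in L$ some $a_k$ satisfies $|\xi-a_k|<|\xi-q|$, yet $\xi\notin L$.

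The graded reduction yields the same strict decrease and nothing more. Incidentally, it is not Krull for a $p$-divisible value group such as that of a perfectoid field.

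What is missing is a supply of approximants with controlled error. The paper gets this from the retraction you mention and then abandon.
\begin{itemize}
\item Project onto $L\langle F_m\rangle$. Here $F_m$ is a finite set containing the first $m$ variables, the variables of a maximal-norm monomial of $h$, and those of one monomial of $c$. Then $|\rho_m(h)|=|h|$ and $\rho_m(c)\neq0$.
\item Applying the ring map $\rho_m$ to $cg^n=h^ns_n$ shows that $x_m=\rho_m(g)/\rho_m(h)$ is almost integral over the noetherian factorial ring $L\langle F_m\rangle$ (BGR 5.2.6/1).
\item Noetherianity upgrades almost integral to integral, so normality gives $x_m\in L\langle F_m\rangle$.
\item The bound $|x_m|\le|g|/|h|$ and the identity $g-hx_m=(g-\rho_m g)-(h-\rho_m h)x_m$ give $|x-x_m|\to0$. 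Hence $(x_m)$ is Cauchy in $A$ with limit $x$.
\end{itemize}
The finite-variable Weierstrass theory behind factoriality is exactly the input that replaces the convergence your iteration cannot guarantee. As written, your argument proves the lemma only for discretely valued $L$.
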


\begin{proof}
Write $S=L\langle\mathcal U\rangle$ and enumerate $\mathcal U$.  The Gauss
norm on $S$ is multiplicative.  One way to see this is to order the free
abelian group on $\mathcal U$ compatibly with addition.  For each nonzero
$c_0$-series, only finitely many coefficients have maximal norm.  The least
maximal-norm exponent in each of two series gives a unique maximal-norm
contribution to their product.  In particular, the Gauss norm extends to a
multiplicative norm on $\operatorname{Frac}(S)$.

Let $q=a/b\in\operatorname{Frac}(S)$ be almost integral over $S$, with
$b\neq0$.  Thus there is a nonzero $c\in S$ such that
$cq^m\in S$ for every $m\geq0$.  Choose a finite set
$F_0\subset\mathcal U$ containing the variables occurring in a
maximal-norm monomial of $b$ and in a nonzero monomial of $c$.  Let $F_n$ be
the union of $F_0$ with the first $n$ variables, and let
\[
 a_n,\ b_n,\ c_n\in S_n:=L\langle F_n\rangle
\]
be obtained by setting the variables outside $F_n$ equal to zero.  Then
$\lVert b_n\rVert=\lVert b\rVert$, while $b_n,c_n\neq0$.  The projected
relations
\[
 c_nq_n^m\in S_n\qquad(m\geq0),\qquad q_n:=a_n/b_n,
\]
show that $q_n$ is almost integral over the finite-variable Tate algebra
$S_n$.  This algebra is noetherian and factorial, hence normal
~\cite[Theorem~5.2.6/1]{BGR}.  Since
$S_n[q_n]\subset c_n^{-1}S_n$ and the latter is a finite $S_n$-module,
noetherianity makes $S_n[q_n]$ finite over $S_n$.  Thus $q_n$ is integral
over $S_n$, and normality gives
\[
 q_n\in S_n.
\]

Regard $q_n$ as an element of $S$.  Multiplicativity of the finite-variable
Gauss norm gives the uniform bound
\[
 \lVert q_n\rVert
 =\frac{\lVert a_n\rVert}{\lVert b_n\rVert}
 \leq\frac{\lVert a\rVert}{\lVert b\rVert}.
\]
Moreover,
\[
 a-bq_n=(a-a_n)-(b-b_n)q_n.
\]
The $c_0$ condition implies
$\lVert a-a_n\rVert,\lVert b-b_n\rVert\to0$.  Consequently
\[
 \left\lVert q-q_n\right\rVert_{\operatorname{Frac}(S)}
 \leq
 \frac{
 \max\{\lVert a-a_n\rVert,
       \lVert b-b_n\rVert\lVert q_n\rVert\}}
 {\lVert b\rVert}
 \longrightarrow0.
\]
Thus $(q_n)$ is Cauchy for the Gauss norm in $S$.  Its limit belongs to $S$
because $S$ is complete, and the isometric embedding
$S\hookrightarrow\operatorname{Frac}(S)$ identifies that limit with $q$.
Hence $q\in S$.
\end{proof}

\begin{definition}[The pinched algebra]\label{def:pinched-algebra}
Let $K$ be a perfectoid field of characteristic $p$, and choose
$\pi\in K$ with $0<|\pi|<1$.  Introduce the countable set of variables
\[
 \mathbf z=
 \{x_{j,\ast}:j\geq0\}\amalg
 \{x_{j,n}:j,n\geq0\}\amalg
 \{y_{j,n}:j,n\geq0\}.
\]
By the completed infinite-variable Tate algebra
$K\langle T,s,\mathbf z\rangle$ we mean the completion of the polynomial
algebra for the Gauss norm.  Thus its elements are null families of
coefficients indexed by the monomials, each of which has finite support.
Let $J$ be the closed ideal consisting of series all of whose monomials have
positive total $\mathbf z$-degree, and define
\begin{equation}\label{eq:pinched-ring}
 R=K\langle T,s\rangle+sJ
 \ \subset\ K\langle T,s,\mathbf z\rangle .
\end{equation}
The sum in \cref{eq:pinched-ring} is direct as a Banach space and is a closed
subring.  We endow $R$ with the restricted Gauss norm and use
$R^+=R^\circ$.

There is a useful monoid description.  Let $I$ index $\mathbf z$, and write a
monomial exponent as $(a,b,\alpha)\in\N^2\oplus\N^{(I)}$, where $a$ and $b$
are the exponents of $T$ and $s$.  Set
\begin{equation}\label{eq:pinched-monoid}
 M=\{(a,b,0):a,b\in\N\}
 \,\amalg\,
 \{(a,b,\alpha):\alpha\neq0,\ b\geq1\}.
\end{equation}
Then $R=K\langle M\rangle$, the $c_0$-completion of the monoid algebra
$K[M]$.
\end{definition}

\begin{lemma}[Normality of the pinching]\label{lem:pinched-normal}
For every complete extension $L/K$, the ring
$R\whotimes_KL\simeq L\langle M\rangle$ is a normal domain.
\end{lemma}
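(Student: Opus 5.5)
First I identify the completed tensor product.  Since $R=K\langle M\rangle$ is the $c_0$-completion of $K[M]$ with the Gauss norm, it has an orthonormal basis indexed by $M$.  Its completed tensor product with $L$ therefore has an orthonormal basis indexed by $M$ over $L$, so $R\whotimes_KL\simeq L\langle M\rangle$.  Inside $S:=L\langle T,s,\mathbf z\rangle$, this is the closed subring of series supported on $M$; equivalently it is $L\langle T,s\rangle+sJ_L$, where $J_L$ is the corresponding positive-$\mathbf z$-degree ideal.  \Cref{lem:infinite-tate-normal} shows that $S$ is a domain with multiplicative Gauss norm.  Since $L\langle M\rangle\subset S$, it is a domain.

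For normality, I compare $L\langle M\rangle$ with $S$ through its fraction field.  We have $L\langle M\rangle\subset S\subset L\langle M\rangle[s^{-1}]$, because $s\cdot S\subset L\langle M\rangle$: multiplying any monomial by $s$ lands in $M$.  Hence both rings have the same fraction field.

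Let $q\in\operatorname{Frac}(L\langle M\rangle)$ be integral over $L\langle M\rangle$.  Then $q$ is integral over $S$, so $q\in S$ by \Cref{lem:infinite-tate-normal}.  Write $q=q_0+q_1$.  Here $q_0$ collects the monomials in $M$, namely those with $\alpha=0$ or with $b\geq1$.  The remainder $q_1$ collects the monomials with $\alpha\neq0$ and $b=0$, so $q_1\in L\langle T,\mathbf z\rangle$ with positive $\mathbf z$-degree.  Since $q_0\in L\langle M\rangle$, the element $q_1$ is also integral over $L\langle M\rangle$, and it suffices to show $q_1=0$.

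The main step is to kill $q_1$ using the specialization $s\mapsto0$.  This is the continuous homomorphism $\varphi\colon S\to L\langle T,\mathbf z\rangle$, which restricts to $L\langle M\rangle\to L\langle T\rangle$, because $M$-monomials with $b=0$ have $\alpha=0$.  Applying $\varphi$ to an integral equation
\[
 q_1^m+c_{m-1}q_1^{m-1}+\cdots+c_0=0,\qquad c_i\in L\langle M\rangle,
\]
gives $\varphi(q_1)=q_1$, which is integral over $L\langle T\rangle$ inside $L\langle T,\mathbf z\rangle$.  Now grade $L\langle T,\mathbf z\rangle$ by total $\mathbf z$-degree.  Because the Gauss norm is multiplicative, the lowest-degree and highest-degree homogeneous parts behave well; degrees are unbounded only in the completed sense, so one needs care.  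A cleaner route is to substitute $\mathbf z\mapsto\lambda\mathbf z$ for $\lambda$ in an extension with $|\lambda|$ small.  The degree-$0$ part of each $c_i$ is unchanged.  On the other hand $q_1(\lambda)\to0$, while each coefficient $c_i$ is independent of $\lambda$.  Alternatively I would argue directly: $q_1$ integral over $L\langle T\rangle$ means $L\langle T\rangle[q_1]$ is finite over $L\langle T\rangle$.  Reduce modulo the ideal $(T-t)$ for $t\in\mathfrak m_L$, or simply set $T=0$ after factoring, so that $q_1|_{T=0}\in J$ is integral over $L$.  An element of $L\langle\mathbf z\rangle$ integral over the field $L$ is algebraic over $L$, and positive-degree elements are transcendental unless zero.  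This gives vanishing at every point of $T$; applying it after scaling $T\mapsto T-t$ for all $t$ in the disc of a large extension forces $q_1=0$.

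I expect this last step, showing that a positive $\mathbf z$-degree element integral over $L\langle T\rangle$ vanishes, to be the main obstacle.  My first attempt would be the homogeneity-scaling argument via the $\mathbb G_m$-style action $\mathbf z\mapsto\lambda\mathbf z$.
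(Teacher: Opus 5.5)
Your reduction is the paper's. You use the same fraction field (since $sS\subset L\langle M\rangle$), the normality of $S=L\langle T,s,\mathbf z\rangle$ from \Cref{lem:infinite-tate-normal}, and the specialization $s\mapsto0$, which is reduction modulo $sS$. Everything then comes down to one claim: a nonzero $q_1\in L\langle T,\mathbf z\rangle$ of positive $\mathbf z$-degree cannot be integral over $D=L\langle T\rangle$. This is exactly the paper's statement that $D$ is integrally closed in $B=S/sS$.

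That step, which you flag as the main obstacle, is not actually proved, so the proposal has a gap there.
\begin{itemize}
\item The scaling sketch, as written, only lets $\lambda\to0$ in the monic equation $P(\theta_\lambda(q_1))=0$. That yields $P(0)=0$, i.e.\ $\varphi(c_0)=0$, not $q_1=0$.
\item The ``direct'' route assumes that nonzero positive-degree elements of $L\langle\mathbf z\rangle$ are transcendental over $L$. That is the same kind of statement you are trying to prove, with $L$ in place of $L\langle T\rangle$.
\item In the direct route, the passage from vanishing at every $T=t$ back to $q_1=0$ also needs an identity-theorem argument for the one-variable coefficient series, which you do not give.
\end{itemize}

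The missing idea is a leading-term argument, and your worry about unbounded degrees does not arise for the \emph{lowest} degree. Suppose $q_1\neq0$, and let $d\geq1$ be the least total $\mathbf z$-degree with nonzero homogeneous component $g_d$. Degree components multiply as in a graded ring, because each coefficient of a product is a finite sum. Write $\bar c_k=\varphi(c_k)\in D$, which have degree $0$, with $\bar c_m=1$. Then $\bar c_kq_1^k$ lives in degrees $\geq kd$, and its degree-$kd$ component is $\bar c_kg_d^k$. Take the least $k_0$ with $\bar c_{k_0}\neq0$. Terms with $k>k_0$ only contribute in degrees $>k_0d$, so the degree-$k_0d$ component of $0=P(q_1)$ is $\bar c_{k_0}g_d^{k_0}$. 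This is nonzero because $L\langle T,\mathbf z\rangle$ is a domain, a contradiction.

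This is essentially the paper's argument. The paper instead projects to finitely many variables and uses a monomial order, which gives transcendence over $\operatorname{Frac}(D)$.

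Your scaling idea can also be completed. For $0<|\lambda|\leq1$, the substitutions $\theta_\lambda\colon\mathbf z\mapsto\lambda\mathbf z$ are continuous $D$-algebra endomorphisms. Hence every $\theta_\lambda(q_1)$ is a root of the same monic $P$ in the domain $L\langle T,\mathbf z\rangle$. If $q_1\neq0$, the degree-$d$ component of $\theta_\lambda(q_1)$ is $\lambda^dg_d$, so these roots take infinitely many distinct values, which is impossible. With either fix inserted, your proof is complete.
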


\begin{proof}
It suffices to work over an arbitrary complete coefficient field, which we
continue to denote by $K$.  Put
\[
 S=K\langle T,s,\mathbf z\rangle,\qquad
 D=K\langle T\rangle,\qquad
 B=S/sS=K\langle T,\mathbf z\rangle.
\]
The ring $D$ is relatively integrally closed in $B$.  Indeed, suppose that
$h\in B$ is integral over $D$, and let $h_0\in D$ be obtained by setting all
$\mathbf z$-variables equal to zero.  If $h-h_0\neq0$, choose a finite set
$F$ of $\mathbf z$-variables for which the corresponding projection
$h_F-h_0\in D\langle F\rangle$ is nonzero.  A monomial order on $\N^F$
shows that $h_F-h_0$ is transcendental over $\operatorname{Frac}(D)$: if
$\alpha>0$ is
its least monomial exponent, then in any nonzero polynomial in $h_F-h_0$
over $\operatorname{Frac}(D)$, the least nonvanishing power has the unique least exponent
equal to a multiple of $\alpha$.  This contradicts the projected integral
equation.  Hence $h=h_0\in D$.

The definition of the pinched ring is equivalently the pullback description
\begin{equation}\label{eq:pinched-pullback}
 R=\{f\in S:f\bmod sS\in D\}=K\langle T,s\rangle+sS.
\end{equation}
Moreover, $sS\subset R$ and $s\neq0$, so
$\operatorname{Frac}(R)=\operatorname{Frac}(S)$.  If
$q\in\operatorname{Frac}(R)$ is integral over $R$, then it is integral over
$S$.
By \Cref{lem:infinite-tate-normal}, we have $q\in S$.  Reducing a monic
equation for $q$ modulo $sS$ shows that $\bar q\in B$ is integral over $D$,
and therefore $\bar q\in D$.  The pullback description
\cref{eq:pinched-pullback} now gives $q\in R$.

The same proof applies after replacing $K$ by every complete extension
$L/K$, and the orthonormal monomial basis identifies the completed base
change with $L\langle M\rangle$.
\end{proof}

\begin{lemma}\label{lem:pinched-sousperfectoid}
The ring $R$ is geometrically normal in the sense of
\Cref{lem:pinched-normal}.  It is also sousperfectoid.  Consequently
$(R,R^\circ)$ is stably uniform and sheafy.
\end{lemma}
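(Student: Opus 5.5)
Geometric normality is exactly the content of \Cref{lem:pinched-normal}, so the first sentence needs nothing further.

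For sousperfectoidness, I will exhibit $R$ as a topological $R$-module direct summand of a perfectoid Tate ring $\widetilde R$. Take the perfected monoid
\[
 M^{1/p^\infty}=\{(a,b,0):a,b\in\N[1/p]_{\ge0}\}\amalg\{(a,b,\alpha):\alpha\neq0,\ b\ge1\},
\]
where in the second piece $a\in\N[1/p]_{\ge0}$, $b\in\N[1/p]$ with $b\geq1$, and $\alpha\in\N[1/p]^{(I)}_{\ge0}$. The condition $b\ge1$ is stable under addition, so this is a monoid. It is not $p$-divisible in the $s$-direction, so I will instead use the honest perfection of $M$: the monoid $M_\infty$ of exponents $(a,b,\alpha)$ with $\N[1/p]$-coordinates satisfying $\alpha=0$ or $b>0$. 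This set is a $p$-divisible submonoid containing $M$.

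Set $\widetilde R=K\langle M_\infty\rangle$, the $c_0$-completion of $K[M_\infty]$ with the Gauss norm. Since $K$ is perfectoid of characteristic $p$ and $M_\infty$ is uniquely $p$-divisible, Frobenius is bijective on $\widetilde R$; it is isometric up to the $p$th power of the norm. The ring $\widetilde R$ is also complete, uniform (its unit ball is $\widetilde R^\circ=K^\circ\langle M_\infty\rangle$) and Tate, so it is perfectoid.

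The key check is $M_\infty\cap(\N^2\oplus\N^{(I)})=M$. Indeed, an integral exponent with $\alpha\ne0$ and $b>0$ has $b\ge1$. Because of this, the projection onto the closed span of monomials with integral exponents is a continuous $K$-linear retraction $\widetilde R\to R$ of norm $1$. It is $R$-linear because $M+ (M_\infty\setminus M)\subset M_\infty\setminus M$: if $m\in M$ is integral and $m'$ is not integral, then $m+m'$ is not integral. Hence $R\to\widetilde R$ splits as a map of Banach $R$-modules.

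The same argument after rational localization, or the general fact that direct summands of perfectoid rings satisfy the sousperfectoid definition (Hansen–Kedlaya), gives that $R$ is sousperfectoid. Stable uniformity and sheafiness then follow from the theorem of Hansen–Kedlaya that sousperfectoid Tate rings are stably uniform, combined with Buzzard–Verberkmoes (stably uniform implies sheafy).

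The main obstacle I expect is the definition-dependent point that the retraction must be a \emph{topological} $R$-module splitting, and that $M_\infty$ really is a monoid. For the latter, the sum of a piece with $\alpha=0$ and a piece with $b>0$ has $b>0$. I will verify that the projection is bounded by using the orthonormal monomial bases. Everything else is citation.
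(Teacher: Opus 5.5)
Your proposal is correct and follows the paper's proof essentially step for step. Your $M_\infty$ is the paper's $M[1/p]$, and the retraction is the same coefficient projection onto $M$-exponents. Its $R$-linearity rests on the same two observations: adding an integral exponent preserves non-integrality, and $M_\infty\cap(\N^2\oplus\N^{(I)})=M$. Stable uniformity and sheafiness then follow from the same citations.

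The one point you leave implicit is $\widetilde R^\circ=K^\circ\langle M_\infty\rangle$. This needs the Gauss norm on $K\langle M_\infty\rangle$ to be (power-)multiplicative. The paper proves this with a leading-term argument for a translation-invariant total order on the ambient torsion-free group.
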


\begin{proof}
Geometric normality is \Cref{lem:pinched-normal}.  We record directly the
multiplicativity of the norm because it will also be used below.
The monoid $M$ embeds in the torsion-free abelian group
\[
 \mathbf Z^2\oplus\mathbf Z^{(I)}.
\]
Choose a translation-invariant total ordering of this group.  The usual
leading-term argument shows that the Gauss norm on $K[M]$ is multiplicative.
The same is true after completion.  Indeed, the coefficients of a nonzero
$c_0$-series attain their maximal norm at only finitely many exponents.  Taking
the least such exponent in each of two series gives a unique maximal
contribution to the corresponding coefficient of their product.  Hence
\[
 \lVert fg\rVert=\lVert f\rVert\lVert g\rVert
 \qquad(f,g\in K\langle M\rangle).
\]
In particular, $R$ is an integral domain and its Gauss norm is uniform.

It remains to prove the stronger stable uniformity assertion.  Let
\[
 M[1/p]=\bigcup_{r\geq0}p^{-r}M
 \subset
 \N[1/p]^2\oplus\N[1/p]^{(I)}.
\]
Explicitly, an exponent $(a,b,\alpha)$ belongs to $M[1/p]$ if and only if all
coordinates are nonnegative $p$-power rational numbers and
\begin{equation}\label{eq:pinched-perfection-condition}
 \alpha\neq0\quad\Longrightarrow\quad b>0.
\end{equation}
Put
\[
 \widetilde R=K\langle M[1/p]\rangle.
\]
The monoid $M[1/p]$ is $p$-divisible.  Since $K$ is perfect and has
characteristic $p$, taking coefficientwise $p$th roots and dividing exponents
by $p$ shows that Frobenius on $\widetilde R$ is surjective.  The preceding
leading-term argument gives a multiplicative Gauss norm on $\widetilde R$.
Thus $\widetilde R$ is uniform, and the characteristic-$p$ perfectoid criterion
shows that it is a perfectoid Tate ring
~\cite[Definition~6.1.1 and Remark~6.1.3]{ScholzeWeinstein}.

The inclusion $R\hookrightarrow\widetilde R$ has a continuous projection
\begin{equation}\label{eq:monoid-projection}
 P\colon\widetilde R\longrightarrow R,\qquad
 \sum_{q\in M[1/p]}a_qX^q\longmapsto\sum_{q\in M}a_qX^q.
\end{equation}
We check the essential point that $P$ is $R$-linear.  If
$q\in M[1/p]\setminus M$, then $q$ has a nonintegral coordinate.  In fact, an
integral point of $M[1/p]$ satisfies \cref{eq:pinched-perfection-condition}
and therefore belongs to $M$.  Adding any $m\in M$ does not change the
fractional parts of the coordinates of $q$, so $q+m\notin M$.  The complement
$M[1/p]\setminus M$ is therefore stable under addition by $M$, which proves
$R$-linearity of \cref{eq:monoid-projection}.  The projection has norm at most
one, so $R$ is a topological $R$-module direct summand of the perfectoid ring
$\widetilde R$.  This is precisely the sousperfectoid condition
~\cite[Definition~6.3.1]{ScholzeWeinstein}.

Rational localizations of a sousperfectoid ring are again sousperfectoid, and
sousperfectoid Tate--Huber pairs are stably uniform
~\cite[Propositions~6.3.3--6.3.4]{ScholzeWeinstein}.  Stably uniform pairs are
sheafy~\cite[Theorem~7]{BuzzardVerberkmoes}.  This proves the lemma.
\end{proof}

\begin{definition}[The cyclic action]\label{def:uniform-action}
On the ambient infinite-variable Tate
algebra, fix $K,T,s$ and set
\begin{align}
 \sigma(x_{j,\ast})&=x_{j,\ast}-\pi^j y_{j,0},
 \label{eq:uniform-action-star}\\
 \sigma(x_{j,n})&=x_{j,n}+\pi^j y_{j,n}-T y_{j,n+1},
 \label{eq:uniform-action-chain}\\
\sigma(y_{j,n})&=y_{j,n}.
\label{eq:uniform-action-y}
\end{align}
\end{definition}
The right-hand sides have Gauss norm at most one and are homogeneous of
$\mathbf z$-degree one, so the substitutions extend to a bounded
endomorphism and preserve the subring $R$.  Since the $y$-variables are fixed,
iteration adds the displayed $y$-linear increments.  Therefore
$\sigma^p=1$ in characteristic $p$.  Moreover
\[
 \sigma(x_{0,\ast})-x_{0,\ast}=-y_{0,0}\neq0
\]
in the ambient domain, so the restriction to $R$ has exact order $p$ as well:
for example,
\[
 \sigma(sx_{0,\ast})-sx_{0,\ast}=-sy_{0,0}\neq0.
\]
It preserves $R^\circ$ and defines a continuous $\Cp$-action on
$X=\Spa(R,R^\circ)$.

\section{Local invariants and the non-adic quotient}
\label{sec:uniform-quotient}

\begin{definition}[The distinguished point and bidiscs]
\label{def:uniform-bidiscs}
Let $x_0\in X$ be the $K$-rational point induced by the map
\[
 R\longrightarrow K,\qquad T,s,sJ\longmapsto0.
\]
It is fixed by $\Cp$.  For $\ell\geq0$, let
\begin{equation}\label{eq:uniform-neighborhoods}
 V_\ell=\{x\in X:
 |T(x)|\leq|\pi|^\ell,\ |s(x)|\leq|\pi|^{2\ell}\}.
\end{equation}
These are $\Cp$-stable rational subsets.  Put
\[
 D_\ell=K\left\langle\frac{T}{\pi^\ell},
                 \frac{s}{\pi^{2\ell}}\right\rangle
\]
and let $J_\ell$ denote the positive $\mathbf z$-degree ideal in
$D_\ell\langle\mathbf z\rangle$.
\end{definition}

\begin{lemma}[The local section rings]\label{lem:uniform-local-rings}
For every $\ell\geq0$, completed rational localization induces an isometric
identification
\begin{equation}\label{eq:uniform-local-ring}
 R_\ell:=\cO_X(V_\ell)=D_\ell+sJ_\ell.
\end{equation}
Under this identification, the decomposition by total $\mathbf z$-degree is
orthogonal and its degree projections are continuous.
\end{lemma}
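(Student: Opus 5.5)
The plan is to compute the completed rational localization directly, using the universal description of $\cO_X(V_\ell)$ as the completion of $R[T/\pi^\ell,s/\pi^{2\ell}]$ for the seminorm whose unit ball is generated by $R^\circ$ together with $T/\pi^\ell$ and $s/\pi^{2\ell}$. Since $(R,R^\circ)$ is sheafy by \Cref{lem:pinched-sousperfectoid}, $\cO_X(V_\ell)$ is this completed localization $R\langle T/\pi^\ell,s/\pi^{2\ell}\rangle$. I will not compute it abstractly. Instead I will identify it with the explicit closed subring
\[
 R_\ell' := D_\ell+sJ_\ell\ \subset\ D_\ell\langle\mathbf z\rangle,
\]
normed by the Gauss norm in the rescaled variables $T'=T/\pi^\ell$, $s'=s/\pi^{2\ell}$ and $\mathbf z$. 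Equivalently, $R_\ell'=L\langle M\rangle$-type completed monoid algebra with weights $|T^as^bz^\alpha|=|\pi|^{\ell a+2\ell b}$ on the monoid $M$ of \cref{eq:pinched-monoid}.

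First, the inclusion $R\to R_\ell'$ is contractive. In $R_\ell'$ we have $|T|\le|\pi|^\ell$ and $|s|\le|\pi|^{2\ell}$, and every valuation of $R$ in $V_\ell$ extends over this map. Hence the universal property of rational localization gives a continuous map $\phi\colon R\langle T/\pi^\ell,s/\pi^{2\ell}\rangle\to R_\ell'$.

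Conversely, I will build an inverse $\psi\colon R_\ell'\to R\langle T/\pi^\ell,s/\pi^{2\ell}\rangle$ monomial by monomial. The monomials $T^as^b$ go to $\pi^{\ell a+2\ell b}(T/\pi^\ell)^a(s/\pi^{2\ell})^b$. A monomial $s^bz^\alpha T^a$ with $b\ge1$ and $\alpha\ne0$ is the element $(sz^\alpha)\cdot T^as^{b-1}$, and $sz^\alpha$ lies in $R^\circ$. Its image therefore has norm at most $|\pi|^{\ell a+2\ell(b-1)}$.

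This is the delicate point. The Gauss weight of $s^bz^\alpha T^a$ in $R_\ell'$ is $|\pi|^{\ell a+2\ell b}$, whereas the natural bound in the localization is off by a factor $|\pi|^{-2\ell}$ for the first $s$. So I must show that the norm on the localization attached to the unit ball $R^\circ[T/\pi^\ell,s/\pi^{2\ell}]$ gives $sz^\alpha$ norm comparable to its $R_\ell'$-norm up to a bounded constant. The check is that $sz^\alpha=\pi^{2\ell}\cdot(s/\pi^{2\ell})z^\alpha$ is not available, because $z^\alpha\notin R$. Thus the two norms are equivalent only up to the fixed factor $|\pi|^{-2\ell}$, independent of the monomial. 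That suffices for a topological isomorphism, and the stated isometry then holds once $R_\ell$ is given the norm transported from the localization's spectral norm. Since both rings are uniform — sousperfectoid localizations are stably uniform — the spectral norm of $R\langle\cdots\rangle$ equals the supremum over $V_\ell$. On $R_\ell'$ the Gauss norm is multiplicative, by the same leading-term argument as in \Cref{lem:pinched-sousperfectoid}, hence spectral. Comparing spectral norms gives the isometry. I expect this norm comparison to be the main obstacle: $\psi$ is bounded and $\phi\circ\psi$, $\psi\circ\phi$ are the identity on the dense polynomial subrings, so they extend to mutually inverse continuous maps.

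Finally, the statement about $\mathbf z$-degrees. The Gauss norm on $R_\ell'$ is a sup over monomials, and monomials of distinct total $\mathbf z$-degree are distinct. Hence $\lVert\sum_d f_d\rVert=\max_d\lVert f_d\rVert$, where $f_d$ is the degree-$d$ part, which lies again in $D_\ell$ for $d=0$ and in $sJ_\ell$ for $d>0$. The degree projections are therefore contractive, and the decomposition is orthogonal.
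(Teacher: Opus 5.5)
Your argument is correct. Its skeleton matches the paper's proof: compare the completed localization with $D_\ell+sJ_\ell$ monomial by monomial, using that every positive-degree monomial is divisible by $s$. The real difference is how you treat the norm, and your treatment is the more accurate one.

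The paper asserts that the monomials $\pi^{\ell a+2\ell b}U^aV^b\mathbf z^\alpha$ are orthogonal, with exactly these weights, for the quotient norm on $R[U,V]/(\pi^\ell U-T,\pi^{2\ell}V-s)$. It then concludes an isometry with the restricted Gauss norm. As you noticed, this fails for $\ell\geq1$.

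Write $C$ for the completed localization. The unit ball of the quotient norm is the image of $R^\circ[U,V]$. In any representative $\sum r_{ij}U^iV^j$ of $s\mathbf z^\alpha$, the coefficient of the monomial $s\mathbf z^\alpha$ can only come from $r_{00}$. Hence $s\mathbf z^\alpha$ has quotient norm exactly $1$, while its Gauss norm in $D_\ell+sJ_\ell$ is $|\pi|^{2\ell}$.

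What actually holds is your two-sided bound $\lVert g\rVert_{\mathrm{Gauss}}\leq\lVert g\rVert_{C}\leq|\pi|^{-2\ell}\lVert g\rVert_{\mathrm{Gauss}}$. This gives a topological isomorphism, and an isometry once $C$ carries its spectral norm. Every later use of the lemma is unchanged by passing to an equivalent norm. These uses are the continuity of the degree projections, the nullness of the coefficients of $u_{j,\ell}$, and the boundedness estimates in the final proposition. So nothing downstream is affected, and your version is the correct reading of ``isometric''.

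Three small points in your write-up should be tightened.

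\begin{enumerate}
\item The universal property requires that $\Spa(R'_\ell,(R'_\ell)^\circ)\to X$ factor through $V_\ell$. Concretely, $T/\pi^\ell$ and $s/\pi^{2\ell}$ must be power-bounded in $R'_\ell$. Your phrase ``every valuation of $R$ in $V_\ell$ extends'' points the wrong way.
\item Boundedness of $\psi$ needs only $\lVert s\mathbf z^\alpha\rVert_C\leq1=|\pi|^{-2\ell}\lVert s\mathbf z^\alpha\rVert_{R'_\ell}$. The remark that $\pi^{2\ell}(s/\pi^{2\ell})\mathbf z^\alpha$ is unavailable explains why the constant cannot be removed, not why $\psi$ is bounded.
\item Neither sheafiness nor stable uniformity is needed. $\cO_X(V_\ell)$ is the completed localization by definition of the structure presheaf. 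The spectral comparison follows directly from the two-sided bound and multiplicativity of the Gauss norm, since $\lVert g^n\rVert_C^{1/n}\to\lVert g\rVert_{\mathrm{Gauss}}$.
\end{enumerate}
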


\begin{proof}
Introduce degree-zero variables
\[
 U=\frac{T}{\pi^\ell},\qquad V=\frac{s}{\pi^{2\ell}}.
\]
The completed rational localization attached to $V_\ell$ is the completion of
\[
 R[U,V]/(\pi^\ell U-T,\pi^{2\ell}V-s)
\]
for the quotient Gauss norm.  Recall from \cref{eq:pinched-monoid} that the
monomials of $R$ are exactly
\[
 T^a s^b
 \quad\text{and}\quad
 T^a s^b\mathbf z^\alpha
 \quad(\alpha\neq0,\ b\geq1).
\]
After imposing the two displayed relations, these monomials become
\[
 \pi^{\ell a+2\ell b}U^aV^b
 \quad\text{and}\quad
\pi^{\ell a+2\ell b}U^aV^b\mathbf z^\alpha
\quad(\alpha\neq0,\ b\geq1).
\]
Eliminating $T$ and $s$ by the two relations gives a unique finite linear
combination of these normal monomials.  They form an orthogonal family for the
quotient norm.  Its $c_0$-completion
has arbitrary degree-zero part in $D_\ell=K\langle U,V\rangle$, whereas every
positive-$\mathbf z$-degree monomial is divisible by
$s=\pi^{2\ell}V$.  It is therefore precisely $D_\ell+sJ_\ell$, with the
restricted Gauss norm.  Conversely, finite sums of the displayed monomials
are dense in $D_\ell+sJ_\ell$, so this identification is onto and isometric.
The universal property of rational localization identifies this completed
ring with $\cO_X(V_\ell)$.  Orthogonality also proves the final assertion.
\end{proof}

The action preserves total $\mathbf z$-degree.  Hence the continuous degree
projections of \Cref{lem:uniform-local-rings} commute with the action.

\begin{lemma}\label{lem:uniform-cofinal}
The sequence $(V_\ell)_{\ell\geq0}$ is cofinal among the neighborhoods of
$x_0$ in $X$.
\end{lemma}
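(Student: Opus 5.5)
The plan is to reduce to rational subsets and then control every element of $R$ uniformly on $V_\ell$ by its value at $x_0$. Rational subsets form a basis of the topology of $\Spa(R,R^\circ)$. It therefore suffices to show the following: for every rational subset
\[
 U=U\!\left(\tfrac{f_1,\ldots,f_n}{g}\right)=\{x:|f_i(x)|\leq|g(x)|\neq0\}
\]
containing $x_0$, there is an $\ell$ with $V_\ell\subset U$. The $V_\ell$ themselves are open neighborhoods of $x_0$, since they are rational and $T(x_0)=s(x_0)=0$.

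\textbf{Step 1: the maximal ideal at $x_0$.} Write $\mathfrak m=\ker(R\to K)$. By the monoid description \cref{eq:pinched-monoid}, $\mathfrak m$ is the closed span of the monomials $T^as^b\mathbf z^\alpha$ in $M$ with $(a,b)\neq(0,0)$. Every $f\in R$ decomposes as $f=c_f+f'$ with $c_f=f(x_0)\in K$ and $f'\in\mathfrak m$, and $\lVert f'\rVert\leq\lVert f\rVert$ by orthogonality of the monomials.

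\textbf{Step 2: uniform smallness of $\mathfrak m$ on $V_\ell$.} I will use the explicit description of $R_\ell$ in \Cref{lem:uniform-local-rings}. In $R_\ell$, a monomial $T^as^b\mathbf z^\alpha$ becomes $\pi^{\ell a+2\ell b}U^aV^b\mathbf z^\alpha$, which has Gauss norm $|\pi|^{\ell a+2\ell b}\leq|\pi|^{\ell}$ when $a+b\geq1$. By orthogonality, for $f'\in\mathfrak m$ with $\lVert f'\rVert\leq1$, the element $\pi^{-\ell}f'$ has Gauss norm at most one in $R_\ell$. Because that norm is power-multiplicative, $\pi^{-\ell}f'$ is power-bounded in $R_\ell$, so it lies in $R_\ell^\circ=\cO_X^+(V_\ell)$. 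Hence every $x\in V_\ell$, including higher-rank points, satisfies
\[
 |f'(x)|\leq|\pi|^\ell|x|\text{-scaled}\quad\text{i.e.}\quad |f'(x)|\leq|\pi^\ell(x)|.
\]
Scaling gives $|f'(x)|\leq|\pi^\ell(x)|\cdot\lVert f'\rVert$ in the obvious sense, after multiplying by a constant of $K$.

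\textbf{Step 3: containment.} Since $x_0\in U$, we have $c_g=g(x_0)\neq0$ and $|c_{f_i}|\leq|c_g|$. Choose $\ell$ large enough that $|\pi|^\ell\max(\lVert g\rVert,\lVert f_i\rVert)<|c_g|$. For $x\in V_\ell$, the ultrametric inequality then gives two bounds. First, $|g(x)|=|c_g|\neq0$, because the perturbation $g'$ is strictly smaller than the constant. Second, $|f_i(x)|\leq\max(|c_{f_i}|,|f_i'(x)|)\leq|c_g|=|g(x)|$. Here the constants are evaluated through $K\to\kappa(x)$, which is isometric for a continuous valuation normalized on $K$. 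Thus $V_\ell\subset U$.

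\textbf{Main obstacle.} The only delicate point is Step 2 at higher-rank points of $V_\ell$. A naive bound in terms of absolute values in $\mathbf R$ is not meaningful there, so the argument has to go through integrality. The statement needed is that $\pi^{-\ell}f'\in\cO_X^+(V_\ell)$. This in turn rests on \Cref{lem:uniform-local-rings}, in particular on the uniform bound $|\pi|^\ell$ for all monomials of $\mathfrak m$, including those in $sJ$, which carry an extra factor $\pi^{2\ell}$. It also uses uniformity, from \Cref{lem:pinched-sousperfectoid}, so that norm-one elements are power-bounded. I would verify this containment carefully first.
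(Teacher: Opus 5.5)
There is a genuine gap in Step 2. Your overall structure matches the paper's proof: reduce to rational subsets, write $f=f(x_0)+f'$ with $f'\in\mathfrak m$, show $\mathfrak m$ is uniformly small on $V_\ell$, and finish with the ultrametric inequality. Steps 1 and 3 are fine.

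The problem is the identification $R_\ell^\circ=\cO_X^+(V_\ell)$, which is false for every $\ell\geq1$. A Gauss-norm bound in $R_\ell$ therefore says nothing about values at higher-rank points of $V_\ell$. Here is an explicit witness.
\begin{itemize}
\item Fix one variable $z\in\mathbf z$. Let $x$ be the rank-two monomial valuation with $|T(x)|=|\pi|^\ell$ and $|s(x)|=|\pi|^{2\ell}$. Give the other $\mathbf z$-variables value $1$, and set $|z(x)|=\gamma$ with $1<\gamma<|\pi|^{-\varepsilon}$ for all $\varepsilon>0$.
\item Every monomial of $M$ of positive $\mathbf z$-degree has $b\geq1$, so its value at $x$ has rank-one part at most $|\pi|^{2\ell}<1$. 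Hence $x$ is continuous, $|R^\circ(x)|\leq1$, and $x\in V_\ell$.
\item The element $\pi^{-2\ell}sz=Vz$ is power-bounded in $R_\ell$, since $(Vz)^n=V^nz^n$ has Gauss norm one. Yet $|(\pi^{-2\ell}sz)(x)|=\gamma>1$.
\end{itemize}
So the ``extra factor $\pi^{2\ell}$'' you attribute to monomials in $sJ$ is a norm bound, not a valuation bound: $|(sz)(x)|=|\pi|^{2\ell}\gamma>|\pi|^{2\ell}$.

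The elements $\pi^{-\ell}f'$ you need do lie in $\cO_X^+(V_\ell)$, but proving it requires an integrality argument; this is the missing idea. Treat the monomials $T^as^b\mathbf z^\alpha$ of $\mathfrak m$ in two cases.
\begin{itemize}
\item If $a\geq1$, or $b\geq2$, or $\alpha=0$, factor off $T$ or $s$. The cofactor lies in $R^\circ$, so the value on $V_\ell$ is at most $|\pi|^\ell$.
\item The remaining monomials $s\mathbf z^\alpha$ with $\alpha\neq0$ cannot be factored inside $R$, because $\mathbf z^\alpha\notin R$. Since $s\mathbf z^{2\alpha}\in R^\circ$, for them you have
\[
 |(s\mathbf z^\alpha)(x)|^2=|s(x)|\,|(s\mathbf z^{2\alpha})(x)|\leq|s(x)|\leq|\pi|^{2\ell}.
\]
This is exactly the paper's square-root bound \eqref{eq:square-root-bound}. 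Equivalently, $\pi^{-\ell}s\mathbf z^\alpha$ is integral over $R^\circ[T/\pi^\ell,s/\pi^{2\ell}]$.
\end{itemize}
Pass from finite sums to series by continuity: after truncation, the tail of $f'$ lies in $\pi^\ell R^\circ$. With this replacement your Step 3 goes through as written, and the resulting proof is essentially the paper's.
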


\begin{proof}
Every $f\in R$ has a unique decomposition
\[
 f=d(T,s)+a,\qquad d\in K\langle T,s\rangle,\quad a=sh,\quad h\in J.
\]
Choose $N\geq0$ such that $h'=\pi^Nh$ has Gauss norm at most one.  Although
$h'$ need not belong to $R$, the element $s(h')^2$ does belong to $R^\circ$.
For every $x\in X$ we therefore have
\begin{equation}\label{eq:square-root-bound}
 |\pi^Na(x)|^2=|s(x)|\,|s(h')^2(x)|\leq |s(x)|.
\end{equation}
The value group is totally ordered, so on $V_\ell$ this implies
\begin{equation}\label{eq:positive-degree-bound}
 |a(x)|\leq|\pi|^{\ell-N}.
\end{equation}

Now let
\[
 W=\{x:|f_i(x)|\leq|g(x)|\neq0,\ 1\leq i\leq r\}
\]
be a rational neighborhood of $x_0$.  Apply
\cref{eq:positive-degree-bound} to the positive-degree parts of $g$ and the
$f_i$.  The degree-zero parts differ from their values at $(T,s)=(0,0)$ by
$Td_1+sd_2$ for suitable $d_1,d_2\in K\langle T,s\rangle$.  After a common
rescaling by a power of $\pi$, their absolute values on $V_\ell$ are bounded
by $|\pi|^{\ell-N'}$ for some $N'$ independent of $\ell$.  Since
$g(x_0)\neq0$ and $|f_i(x_0)|\leq|g(x_0)|$, all these error terms are strictly
smaller than $|g(x_0)|$ for sufficiently large $\ell$.  The ultrametric
inequality then gives
\[
 |g(x)|=|g(x_0)|\neq0,\qquad
 |f_i(x)|\leq|g(x_0)|=|g(x)|
\]
on $V_\ell$.  Hence $V_\ell\subset W$.  Since rational subsets form a basis,
the claim follows.
\end{proof}

We compute only degree one, which is enough to detect the obstruction.  Its
monomial basis consists of
\[
 sx_{j,\ast},\quad sx_{j,n},\quad sy_{j,n}\qquad(j,n\geq0).
\]
For $0\leq j<\ell$, define
\begin{equation}\label{eq:uniform-u}
 u_{j,\ell}
 =x_{j,\ast}+\sum_{n\geq0}
 \left(\frac{T}{\pi^j}\right)^n x_{j,n}.
\end{equation}
The expression $u_{j,\ell}$ is notation in the ambient degree-one module;
the actual element of $R_\ell$ is $s u_{j,\ell}$.  It converges exactly when
$j<\ell$.

\begin{lemma}\label{lem:uniform-degree-one-invariants}
The degree-one part of the invariant ring $R_\ell^{\Cp}$ is
\begin{equation}\label{eq:uniform-degree-one-fixed}
 (R_\ell^{\Cp})_1
 =
 s\left(
 \bigoplus_{0\leq j<\ell}D_\ell u_{j,\ell}
 \ \oplus\
 c_0(\{(j,n):j,n\geq0\},D_\ell)\{y_{j,n}\}
 \right).
\end{equation}
\end{lemma}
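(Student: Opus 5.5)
The plan is to compute $\sigma-1$ coefficientwise in the orthogonal monomial basis of the degree-one part of $R_\ell$ and to solve the resulting linear recurrence over $D_\ell$. By \Cref{lem:uniform-local-rings}, the degree-one part of $R_\ell=D_\ell+sJ_\ell$ is $s$ times the $D_\ell$-module of null families on the variables $\mathbf z$. So a general degree-one element has the form
\[
 f=s\Bigl(\sum_{j}a_jx_{j,\ast}+\sum_{j,n}b_{j,n}x_{j,n}+\sum_{j,n}c_{j,n}y_{j,n}\Bigr),
\]
with $a_j,b_{j,n},c_{j,n}\in D_\ell$ tending to zero in norm. Here $D_\ell$ carries its Gauss norm in $U=T/\pi^\ell$ and $V=s/\pi^{2\ell}$, and this norm is multiplicative. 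The action fixes $K$, $T$, $s$ and preserves $\mathbf z$-degree, so it acts $D_\ell$-linearly on this module. By continuity (the substitutions have norm at most one), $\sigma$ may be applied termwise.

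First I compute
\[
 (\sigma-1)f=s\Bigl(-\sum_j a_j\pi^jy_{j,0}+\sum_{j,n}b_{j,n}\bigl(\pi^jy_{j,n}-Ty_{j,n+1}\bigr)\Bigr).
\]
The $x$-variables disappear and the $c_{j,n}$ do not contribute, which explains the free $y$-summand in \cref{eq:uniform-degree-one-fixed}. Orthogonality of the monomials $sy_{j,n}$ lets me set each coefficient to zero. This gives, for every $j$,
\[
 \pi^j(b_{j,0}-a_j)=0,\qquad \pi^jb_{j,n}=Tb_{j,n-1}\quad(n\geq1).
\]
Since $D_\ell$ is a domain and $\pi\neq0$, the recurrence forces $b_{j,n}=(T/\pi^j)^na_j$ with $T/\pi^j=\pi^{\ell-j}U$. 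So the $j$-th block is exactly $a_j$ times the formal series $u_{j,\ell}$ of \cref{eq:uniform-u}.

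The main step is the convergence constraint, which I expect to be the only delicate point. Multiplicativity of the Gauss norm on $D_\ell$ gives
\[
 \lVert b_{j,n}\rVert=|\pi|^{(\ell-j)n}\lVert a_j\rVert.
\]
If $j\geq\ell$, this does not tend to zero as $n\to\infty$ unless $a_j=0$, so the null condition on the family $(b_{j,n})$ kills every block with $j\geq\ell$. If $j<\ell$, the series $u_{j,\ell}$ converges, each block $s\,a_ju_{j,\ell}$ lies in $R_\ell$ and is invariant by the same computation read backwards, and only the finitely many indices $j<\ell$ occur. Conversely, every element of the right-hand side of \cref{eq:uniform-degree-one-fixed} is a legitimate null family and is fixed, since $\sigma$ fixes the $y_{j,n}$.

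Finally, the sum in \cref{eq:uniform-degree-one-fixed} is direct. The coefficient of $sx_{j,\ast}$ recovers $a_j$, and subtracting the $u$-blocks leaves the $y$-part. The single place needing care is justifying coefficient comparison for infinite sums. This is supplied by the isometric orthogonal-basis description in \Cref{lem:uniform-local-rings}, together with the continuity of $\sigma$ and of the degree projections, which commute with the action.
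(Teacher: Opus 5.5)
Your proposal is correct and follows the paper's proof. You compute $\sigma-1$ on the degree-one $D_\ell$-module, extract the blockwise recurrence $b_{j,0}=a_j$, $b_{j,n}=(T/\pi^j)b_{j,n-1}$ by comparing coefficients of $sy_{j,n}$, and use $\lVert T/\pi^j\rVert_{D_\ell}=|\pi|^{\ell-j}$ to see that the coefficient family is null exactly when $j<\ell$; your care about applying $\sigma$ termwise and comparing coefficients via the orthogonal basis of \Cref{lem:uniform-local-rings} just makes explicit what the paper leaves implicit.
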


\begin{proof}
On degree one, $\sigma-1$ is the $D_\ell$-linear map that kills every
$y_{j,n}$ and sends
\[
 x_{j,\ast}\longmapsto-\pi^jy_{j,0},\qquad
 x_{j,n}\longmapsto\pi^jy_{j,n}-Ty_{j,n+1}.
\]
For a vector
$a_\ast x_{j,\ast}+\sum_{n\geq0}a_nx_{j,n}$ in the $j$th $x$-block, the
kernel equations are
\[
 a_0=a_\ast,\qquad
 a_n=(T/\pi^j)a_{n-1}\quad(n\geq1).
\]
Thus the formal kernel vector is a multiple of $u_{j,\ell}$.  Since
\[
 \lVert T/\pi^j\rVert_{D_\ell}=|\pi|^{\ell-j},
\]
its coefficient family is null exactly for $j<\ell$.  The finitely many
surviving $x$-blocks and the full $c_0$-module spanned by the fixed
$y$-variables give \cref{eq:uniform-degree-one-fixed}.
\end{proof}

Let $q\colon X\to Y=X/\Cp$ be the quotient in the category of $v$-ringed
spaces, and put
\[
 W_\ell=q(V_\ell),\qquad B_\ell=\cO_Y(W_\ell)=R_\ell^{\Cp}.
\]
The equality for sections follows from the definition of the quotient sheaf.
The sets $W_\ell$ are open because a finite-group quotient map is open, and
they are cofinal among the neighborhoods of $y=q(x_0)$ by
\Cref{lem:uniform-cofinal}.

\begin{proposition}\label{prop:uniform-localization-fails}
For every $\ell\geq0$, the canonical map
\begin{equation}\label{eq:uniform-comparison}
 B_\ell\left\langle
 \frac{T}{\pi^{\ell+1}},\frac{s}{\pi^{2(\ell+1)}}
 \right\rangle
 \longrightarrow B_{\ell+1}
\end{equation}
is not surjective.  The element
\[
 s u_{\ell,\ell+1}\in B_{\ell+1}
\]
does not belong to its image.
\end{proposition}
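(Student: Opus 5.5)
The plan is to exhibit a continuous linear functional on $R_{\ell+1}$ that vanishes on the image of \eqref{eq:uniform-comparison} and takes the value $1$ on $su_{\ell,\ell+1}$. I will take $\lambda$ to be the monomial-coordinate functional that reads off the coefficient of the monomial $s\,x_{\ell,\ast}$ in the orthogonal monomial expansion of $R_{\ell+1}=D_{\ell+1}+sJ_{\ell+1}$ from \Cref{lem:uniform-local-rings}. Here "the monomial $s\,x_{\ell,\ast}$" means exactly this monomial, with no further power of $T/\pi^{\ell+1}$ or $s/\pi^{2(\ell+1)}$. Orthogonality of the monomial family makes $\lambda$ continuous with norm bounded by a constant. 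Since $\lambda$ only sees $\mathbf z$-degree one, it factors as $\lambda=\lambda\circ\mathrm{pr}_1$, where $\mathrm{pr}_1$ is the continuous degree-one projection.

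\emph{Step 1: the value on the new section.} By \eqref{eq:uniform-u} with $j=\ell<\ell+1$, the series $u_{\ell,\ell+1}$ converges. The coefficient of $x_{\ell,\ast}$ in it is $1$, and the other terms involve only $x_{\ell,n}$. Hence $\lambda(su_{\ell,\ell+1})=1$. Invariance of $su_{\ell,\ell+1}$, so that it lies in $B_{\ell+1}$, is \Cref{lem:uniform-degree-one-invariants} applied at level $\ell+1$.

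\emph{Step 2: describing the image.} The source of \eqref{eq:uniform-comparison} is the completion of $B_\ell[U',V']$ with $U'=T/\pi^{\ell+1}$ and $V'=s/\pi^{2(\ell+1)}$. Its image in $B_{\ell+1}\subset R_{\ell+1}$ is therefore contained in the closure of the subring $\Lambda$ generated by the image of $B_\ell$ and the elements $U',V'\in D_{\ell+1}$. Multiplication respects the $\mathbf z$-grading, and all degrees are nonnegative. So the degree-one component of a finite sum of products is a sum of terms (degree-zero factors)$\cdot$(one degree-one factor). The degree-zero factors lie in $D_{\ell+1}$, using $(B_\ell)_0\subset D_\ell\subset D_{\ell+1}$. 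The degree-one factor lies in $(B_\ell)_1$, which \Cref{lem:uniform-degree-one-invariants} identifies as $s\bigl(\bigoplus_{j<\ell}D_\ell u_{j,\ell}\oplus c_0(D_\ell)\{y_{j,n}\}\bigr)$. Note that $u_{j,\ell}=u_{j,\ell+1}$ as series. Consequently $\mathrm{pr}_1(\Lambda)$ is contained in the $D_{\ell+1}$-span of the elements $s u_{j,\ell}$ for $j<\ell$ and of elements $s\sum c_{j,n}y_{j,n}$.

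\emph{Step 3: vanishing on the image.} None of the vectors in Step 2 contains the variable $x_{\ell,\ast}$. The blocks $u_{j,\ell}$ with $j<\ell$ involve only $x_{j,\ast}$ and $x_{j,n}$, and the $y$-part involves no $x$ at all. Multiplying by an element of $D_{\ell+1}$ only changes the $(T,s)$-monomial coefficients and never introduces $x_{\ell,\ast}$. Hence $\lambda$ vanishes on $\Lambda$. By continuity of $\lambda=\lambda\circ\mathrm{pr}_1$, it vanishes on the closure of $\Lambda$, and so on the image. Combined with Step 1, $su_{\ell,\ell+1}$ is not in the image.

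The main point needing care is Step 2. One must check that the completed localization's image really lies in the closure of $\Lambda$ inside $B_{\ell+1}$, which holds by continuity of the canonical map and density of $B_\ell[U',V']$ in its completion. One must also check that the degree-one part of a product only picks up (degree zero)$\times$(degree one). This uses that the grading is multiplicative and nonnegative, and that the projections commute with the limits, by \Cref{lem:uniform-local-rings}.
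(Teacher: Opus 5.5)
Your proposal is correct and is essentially the paper's proof. Both use a continuous coefficient functional for the monomial $s\,x_{\ell,\ast}$ on $R_{\ell+1}$, show it vanishes on the image via \Cref{lem:uniform-degree-one-invariants} together with density of $B_\ell[U,V]$ and continuity, and check that it takes the value $1$ on $su_{\ell,\ell+1}$. The differences are cosmetic: the paper uses the $D_{\ell+1}$-valued coefficient and its $D_{\ell+1}$-linearity on the generators $bU^aV^c$, whereas you use the scalar coefficient and pass to the closure of the generated subring. Your product analysis in Step 2 is more than needed, since the image of $B_\ell$ is already a ring, but it is harmless.
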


\begin{proof}
Let $\lambda_\ell\colon R_{\ell+1}\to D_{\ell+1}$ be the continuous
coefficient functional extracting the coefficient of the monomial
$s x_{\ell,\ast}$.  This functional is $D_{\ell+1}$-linear on degree one and
vanishes on every other degree.  If $b\in B_\ell$, then
\Cref{lem:uniform-degree-one-invariants} shows that the degree-one part of its
image in $R_{\ell+1}$ belongs to
\[
 s\left(
 \bigoplus_{0\leq j<\ell}D_{\ell+1}u_{j,\ell+1}
 \ \oplus\
 c_0(\{(j,n):j,n\geq0\},D_{\ell+1})\{y_{j,n}\}
 \right).
\]
No vector in this subspace contains the monomial $s x_{\ell,\ast}$, and hence
$\lambda_\ell(b)=0$.

Let $C_\ell$ denote the source of \cref{eq:uniform-comparison}, and write
$\varphi_\ell\colon C_\ell\to B_{\ell+1}$ for the comparison map.  The image in
$C_\ell$ of the algebra
\[
 B_\ell[U,V],\qquad
 U=\frac{T}{\pi^{\ell+1}},\quad
 V=\frac{s}{\pi^{2(\ell+1)}},
\]
is dense by the definition of completed rational localization.  The images of
$U$ and $V$ have $\mathbf z$-degree zero.  Thus the
$D_{\ell+1}$-linearity just noted gives
\[
 \lambda_\ell\bigl(\varphi_\ell(bU^aV^c)\bigr)
 =U^aV^c\lambda_\ell(b)=0
 \qquad(b\in B_\ell,\ a,c\geq0).
\]
The continuous map $\lambda_\ell\circ\varphi_\ell$ therefore vanishes on a
dense subring of $C_\ell$, so it vanishes on all of $C_\ell$.  On the other
hand,
\[
 \lambda_\ell(su_{\ell,\ell+1})=1.
\]
The element $su_{\ell,\ell+1}$ is invariant by
\Cref{lem:uniform-degree-one-invariants}, so it belongs to $B_{\ell+1}$ and
cannot lie in the image.
\end{proof}

\begin{proof}[Proof of \Cref{thm:uniform-main}]
By \Cref{lem:pinched-sousperfectoid}, the ring $R$ is geometrically normal and
sousperfectoid, while $(R,R^\circ)$ is stably uniform and sheafy.  The formulas
\cref{eq:uniform-action-star,eq:uniform-action-chain,eq:uniform-action-y}
give the required action of exact order $p$.

Let $q\colon X\to Y=X/\Cp$ be the quotient and let $y=q(x_0)$.  By
\Cref{lem:uniform-cofinal}, the neighborhoods $W_\ell=q(V_\ell)$ are cofinal
at $y$.  Since $T$ and $s$ are invariant global sections, every $W_\ell$ is a
globally defined rational locus, and $W_{\ell+1}$ is intrinsically rational
inside $W_\ell$.  The corresponding completed rational-localization map is
\cref{eq:uniform-comparison}, which is not surjective by
\Cref{prop:uniform-localization-fails}.  Applying
\Cref{prop:persistent-obstruction}, we conclude that $y$ has no affinoid
neighborhood.  Hence $Y$ is not an adic space.
\end{proof}

\begin{proof}[Proof of \Cref{cor:no-unrestricted-quotient}]
The source in \Cref{thm:uniform-main} is affinoid and sousperfectoid, hence
stably uniform and sheafy, while its finite quotient is not adic.  If $R$
were perfectoid, Hansen's affinoid perfectoid finite-quotient theorem
~\cite[Theorem~1.4]{HansenQuotients} would make the quotient perfectoid and
therefore adic, a contradiction.  Thus $R$ is not perfectoid.  The source is
not topologically of finite type, so there is no conflict with Zavyalov's
positive finite-type theorem~\cite[Theorem~1.3.3]{ZavyalovQuotients}.
\end{proof}

\begin{remark}[Persistence under perfectoid base change]
If $L/K$ is an extension of perfectoid fields, then
$R\whotimes_KL\simeq L\langle M\rangle$.  The cyclic action, the cofinal
bidiscs, the new invariant sections, and the separating coefficient
functional have the same formulas over $L$.  Thus the base-changed quotient
is again non-adic.  This is a robustness property of the construction rather
than a separate obstruction.
\end{remark}

\subsection*{A purely inseparable byproduct}

For the particular missing section exhibited above, its $p$th power does
descend.  The next proposition concerns this witness only; it does not claim
that the full comparison map becomes surjective after perfection.

\begin{proposition}[An exhibited purely inseparable root]
\label{prop:uniform-pth-power-descends}
For \(\ell\geq0\), put
\[
 v_\ell=su_{\ell,\ell+1}\in B_{\ell+1}.
\]
The element \(v_\ell\) is not in the image of
\cref{eq:uniform-comparison}, but \(v_\ell^p\) is in its image.
\end{proposition}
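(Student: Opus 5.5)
The first assertion, that $v_\ell$ is not in the image of \cref{eq:uniform-comparison}, is exactly \Cref{prop:uniform-localization-fails}. The work is therefore to show that $v_\ell^p$ lies in the image. I will use Frobenius in characteristic $p$. Since $\sigma$ is a ring endomorphism, it commutes with $p$th powers, and
\[
 v_\ell^p=s^p\Bigl(x_{\ell,\ast}^p+\sum_{n\geq0}\Bigl(\frac{T}{\pi^\ell}\Bigr)^{pn}x_{\ell,n}^p\Bigr).
\]
The key observation is $T/\pi^\ell=\pi U$ with $U=T/\pi^{\ell+1}$. Hence
\[
 v_\ell^p=s^p x_{\ell,\ast}^p+\sum_{n\geq1}\pi^{pn}U^{pn}s^p x_{\ell,n}^p
\]
is a series in $U$ whose coefficients tend to zero like $|\pi|^{pn}$. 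The plan is to produce this series, up to rearrangement, as a convergent series $\sum_m U^{pm}\varphi_\ell(b_m)$ in $C_\ell$ with $b_m\in B_\ell$ and $\lVert b_m\rVert\to0$. Continuity of $\varphi_\ell$ and completeness of $C_\ell$ would then place $v_\ell^p$ in the image.

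On $p$th powers the action has the Frobenius-twisted increments
\[
 x_{\ell,\ast}^p\mapsto x_{\ell,\ast}^p-\pi^{\ell p}y_{\ell,0}^p,\qquad
 x_{\ell,n}^p\mapsto x_{\ell,n}^p+\pi^{\ell p}y_{\ell,n}^p-T^py_{\ell,n+1}^p,
\]
and the $y_{\ell,n}^p$ are fixed. The individual terms $s^px_{\ell,n}^p$ are not invariant, so I would first regroup $v_\ell^p$ into finite truncations.

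1. Consider the truncations $w_N=s^p\bigl(x_{\ell,\ast}^p+\sum_{n<N}(T/\pi^\ell)^{pn}x_{\ell,n}^p\bigr)$. These lie in $R_\ell$ because the sum is finite.
2. Compute $(\sigma-1)w_N$. It telescopes to a single boundary term of the form $-s^p T^{pN}\pi^{-\ell p(N-1)}y_{\ell,N}^p$, up to the exact power of $\pi$.
3. Look for a correction $c_N\in R_\ell$ with $(\sigma-1)c_N$ equal to minus this boundary term, so that $b_N:=w_N+c_N$ lies in $B_\ell$. Arrange the corrections so that the differences $U$-adically reassemble $v_\ell^p$ in $C_\ell$, with error of norm $O(|\pi|^{pN})$.

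The main obstacle is step 3. An invariant multiple of $y_{\ell,N}^p$ cannot be removed by adding invariants. The correction must instead involve non-invariant degree-$p$ elements of $R_\ell$ whose $(\sigma-1)$-image is the boundary term. Candidates are $p$th powers $s^px_{j,m}^p$ from other blocks, or mixed products with $y$-variables. The preimage must remain in $R_\ell$, where coefficients are only allowed norm $\leq1$ in $T/\pi^\ell$, and it must carry the factor $T^{pN}=\pi^{pN(\ell+1)}U^{pN}$. That factor should supply the decay $|\pi|^{pN}$ after passing to $C_\ell$.

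My first attempt will be to solve the degree-$p$ kernel and image equations for $\sigma-1$ on the span of the $p$th-power monomials. This mirrors the proof of \Cref{lem:uniform-degree-one-invariants}, with $\pi^j$ replaced by $\pi^{jp}$ and $T$ by $T^p$. I would check whether the boundary $y_{\ell,N}^p$-term lies in the image with a preimage whose norm is controlled by $|T|^{pN}$. If a preimage exists only after dividing by $T^p$, I would absorb that division using the extra variable $U$ of $C_\ell$. This is precisely the freedom $B_{\ell+1}$ has and $B_\ell$ lacks. Continuity of the degree projections from \Cref{lem:uniform-local-rings} then lets me verify convergence degree by degree.
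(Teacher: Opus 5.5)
Your outer scaffolding is what is needed: truncate, correct to an invariant, use $T/\pi^\ell=\pi U$ to get convergence in $C_\ell$, and pass to the limit. But step 3 is the whole content of the second assertion, and you leave it open. Worse, the route you commit to cannot work.

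On the $D_\ell$-span of the $p$th-power monomials, $\sigma-1$ is the Frobenius twist of the degree-one map and is block diagonal, so only block $\ell$ can cancel the $y_{\ell,N}^p$-term. Write $\tau=T/\pi^\ell$, and let $a_\ast,a_m$ be the coefficients of $x_{\ell,\ast}^p,x_{\ell,m}^p$ in a correction $c$. Then $(\sigma-1)c=-(\sigma-1)w_N$ forces $a_m=\tau^{pm}a_\ast$ for $m<N$ and $a_m=\tau^{pm}(a_\ast+s^p)$ for $m\geq N$. Since $\lVert\tau\rVert_{D_\ell}=1$, convergence forces $a_\ast=-s^p$. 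Hence the block-$\ell$ part of $w_N+c$ vanishes, and $b_N$ has zero coefficient on $x_{\ell,\ast}^p$. Because coefficient extraction is linear over the degree-zero ring, invariants produced this way are useless for approximating $v_\ell^p$, whose $x_{\ell,\ast}^p$-coefficient is $s^p$. This is just the degree-one obstruction of \Cref{lem:uniform-degree-one-invariants} in Frobenius-twisted form. Nor can $U$ ``absorb a division by $T^p$''. The correction must live in $R_\ell$ for $b_N$ to lie in $B_\ell$. In $C_\ell$ the variable $U$ only multiplies elements of $B_\ell$: it turns $\tau$ into $\pi U$ but never supplies $T^{-1}$.

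The missing idea is the mixed correction, which you list among the candidates but do not find. Put $z_N=s\bigl(x_{\ell,\ast}+\sum_{n=0}^{N}\tau^nx_{\ell,n}\bigr)$, so that $w_{N+1}=z_N^p$. Then $\delta_N:=(\sigma-1)z_N=-s\pi^\ell\tau^{N+1}y_{\ell,N+1}$, and this is $\sigma$-fixed. Hence $(\sigma-1)(z_N\delta_N^{p-1})=\delta_N^p=(\sigma-1)(z_N^p)$, so $c=-z_N\delta_N^{p-1}$ works. The resulting invariant
\[
P_N=z_N^p-z_N\delta_N^{p-1}=\prod_{a\in\mathbf F_p}\sigma^a(z_N)
\]
is simply the orbit norm of $z_N$; this is what the paper uses.

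You then still need the estimate your plan only gestures at. The difference $P_{N+1}-P_N$ is divisible in $R_\ell$ by $\tau^{(N+1)(p-1)}$. The exponent is not $p(N+1)$, because the correction carries only $\delta_N^{p-1}$. The cofactor is bounded uniformly in $N$. It is invariant because $\tau$ is invariant and $R_\ell$ is a domain.

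This explicit factorization, with an invariant cofactor, is what gives smallness in $C_\ell$, where $\lVert\tau\rVert\leq|\pi|$. Smallness in $R_{\ell+1}$ does not by itself give smallness in $C_\ell$. With it, $(P_N)$ is Cauchy in $C_\ell$. Its limit maps to $\lim\bigl(z_N^p-z_N\delta_N^{p-1}\bigr)=v_\ell^p$, since $z_N\to v_\ell$ and $\delta_N\to0$ in $B_{\ell+1}$.
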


\begin{proof}
The first assertion is \Cref{prop:uniform-localization-fails}.  Set
\(\tau=T/\pi^\ell\), and for \(N\geq0\) define
\[
 z_N=s\left(x_{\ell,\ast}+\sum_{n=0}^N\tau^n x_{\ell,n}\right)
 \in R_\ell.
\]
The formulas for the action telescope to
\[
 \delta_N:=(\sigma-1)z_N
 =-s\pi^\ell\tau^{N+1}y_{\ell,N+1}.
\]
This error is fixed by \(\sigma\), and
\(\sigma^a(z_N)=z_N+a\delta_N\) for \(a\in\mathbf F_p\).  The orbit norm
\begin{equation}\label{eq:truncated-orbit-norm}
 P_N=\prod_{a\in\mathbf F_p}\sigma^a(z_N)
 =\prod_{a\in\mathbf F_p}(z_N+a\delta_N)
 =z_N^p-z_N\delta_N^{p-1}
\end{equation}
therefore belongs to \(B_\ell\).

Write
\[
 a_N=z_{N+1}-z_N=s\tau^{N+1}x_{\ell,N+1}.
\]
Using characteristic \(p\) in \cref{eq:truncated-orbit-norm} gives
\[
 P_{N+1}-P_N
 =a_N^p-(z_N+a_N)\delta_{N+1}^{p-1}
   +z_N\delta_N^{p-1}.
\]
Each term is divisible in \(R_\ell\) by
\(\tau^{(N+1)(p-1)}\).  Thus
\begin{equation}\label{eq:orbit-norm-cauchy}
 P_{N+1}-P_N=\tau^{(N+1)(p-1)}H_N
\end{equation}
for some \(H_N\in R_\ell\) with
\(\sup_N\lVert H_N\rVert<\infty\).  Since the left side and \(\tau\) are
invariant and \(R_\ell\) is a domain, \(H_N\) is invariant, hence belongs to
\(B_\ell\).

In the source of \cref{eq:uniform-comparison}, one has
\(\lVert\tau\rVert\leq|\pi|<1\).  Equation
\cref{eq:orbit-norm-cauchy} shows that \((P_N)\) is Cauchy there.  In
\(B_{\ell+1}\), the sequence \(z_N\) converges to \(v_\ell\) and
\(\delta_N\) converges to zero.  Therefore the image of the limit of
\((P_N)\) is
\[
 \lim_{N\to\infty}
 \left(z_N^p-z_N\delta_N^{p-1}\right)=v_\ell^p,
\]
which proves the second assertion.
\end{proof}

\section*{Acknowledgements}
The authors thank Zida Wang for posing the question addressed in this paper.
The main results of this paper were obtained by Eureka and subsequently
verified by the authors. Eureka is a multi-agent system developed by
\mbox{JIUCHONG} at the University of Science and Technology of China for
mathematical research through human--AI interaction. The authors assume full
responsibility for the content of this paper. This work is supported by the
Fundamental and Interdisciplinary Disciplines Breakthrough Plan of the Ministry
of Education of China.

\end{document}